\theoremstyle{plain}
\newtheorem{theorem}{Theorem}
\newtheorem{lemma}[theorem]{Lemma}
\theoremstyle{definition}
\theoremstyle{remark}
\newtheorem*{remark*}{Remark}
\DeclareMathOperator{\conv}{conv}
\DeclareMathOperator{\tr}{tr}
\DeclareMathOperator{\rank}{rank}
\title[Perron and Frobenius Meet Carath\'{e}odory]{Perron and Frobenius Meet Carath\'{e}odory}
\author[M\'{a}rton Nasz\'{o}di, Alexandr Polyanskii]{M\'{a}rton Nasz\'{o}di, Alexandr Polyanskii}%
\address{M\'{a}rton Nasz\'{o}di, 
\newline\hphantom{iii} \href{https://dcg.epfl.ch}{DCG EPFL}; \href{https://www.math.elte.hu/en/}{Inst. of Math., ELTE}
}
\address{Alexandr Polyanskii,
\newline\hphantom{iii} \href{https://mipt.ru/english/}{MIPT}; \href{http://cmcagu.ru/}{CMC ASU}; \href{http://iitp.ru/en/about}{IITP RAS}
}
\newline\hphantom{iii} \href{https://mipt.ru/english/}{Moscow Institute of Physics and Technology}
\newline\hphantom{iii} Institutskiy per. 9
\newline\hphantom{iii} Dolgoprudny, Russia 141700
\newline\hphantom{iii} \href{http://iitp.ru/en/about}{Institute for Information Transmission Problems RAS}
\newline\hphantom{iii} Bolshoy Karetny per. 19
\newline\hphantom{iii} Moscow, Russia 127994
\newline\hphantom{iii} Adyghe State University
\newline\hphantom{iii} \href{http://cmcagu.ru/}{Caucasus Mathematical Center}
\newline\hphantom{iii} Pervomayskaya str. 208 
\newline\hphantom{iii} Maykop, Russia 385000
\email{\href{mailto:alexander.polyanskii@yandex.ru}{alexander.polyanskii@yandex.ru}}
\urladdr{\url{http://polyanskii.com}}
\thanks{\sc M.~Naszodi, A.~Polyanskii, Perron and Frobenius Meet Carath\'{e}odory}
\keywords{????}
\subjclass[2010]{?}
\begin{document}

\thispagestyle{empty}

\begin{abstract}
We present a new approach of proving certain Carath\'{e}odory-type theorems using the Perron--Frobenius Theorem, a classical result in matrix theory describing the largest eigenvalue of a matrix with positive entries.

One of the problems left open in this note is whether our approach may be extended to prove similar  results in the area, in particular the Colourful Carath\'{e}odory Theorem.
\end{abstract}
\maketitle

\section{Introduction}
\label{section introduction}
Carath\'{e}odory's Theorem~\cite{caratheodory1907} is one of conerstones of combinatorial geometry as Eckhoff~\cite{eckhoff1993helly} called it. The theorem claims that a point in the convex hull of a set $P\subseteq \mathbb R^d$ is in the convex hull of at most $d+1$ points of $P$; see the definition of convex hull in \cref{section preliminaries}.
%. Today we know various extensions and generalizations of this theorem, (need a short sentence here)
The survey \cite{eckhoff1993helly} contains a number of generalizations of the theorem. As it usually happens with fundamental statements, Carath\'{e}odory's Theorem is closely connected with many other classical results in convex geometry such as Helly's and Radon's Theorems~\cite{helly1923mengen,radon1921mengen}. In this note, we provide a new proof of the theorem that shows a connections with a principal theorem in matrix theory, the Perron--Frobenius Theorem. 

Our proofs use induction on the number of points (but not on the dimension). Two, most likely related, questions remain open. First, can induction be avoided in the proofs? Second, does our method yield a proof of other Carath\'{e}odory-type theorems, specifically, B\'{a}r\'{a}ny's Colourful Carath\'{e}odory Theorem \cite{barany1982generalization}.

The paper is organized as follows. In \cref{section preliminaries}, we introduce notations and the tools that we use. In \cref{section proofs}, we illustrate the key idea applying it to prove Rankin's Theorem~\cite[Theorem~1(iii, iv)]{rankin1955}. In \cref{section main}, we use the idea to prove Carath\'{e}odory's and Steinitz's Theorems~\cite{caratheodory1907, steinitz1913}.

\section{Preliminaries}
\label{section preliminaries}
\subsection{Notations}
We write $[n]=\{1,\dots,n\}$, for a positive integer $n$. 
The \textit{convex hull}\ of a finite set of points $\{x_1,\dots,x_n\} \subset \mathbb R^d$, written $\conv \{x_1,\dots,x_n\}$, is the set \[
\left\{\sum_{i=1}^n\lambda_i x_i: \lambda_i\ge 0 \text{ for all }i\in[n], \sum_{i=1}^n\lambda_i=1\right\}.
\]
A point $x$ of a set $P\subseteq \mathbb R^d$ is an \textit{interior point} of $P$, if the set $P$ contains some open ball with center in $x$.

The \textit{spectral radius} of a square matrix $A$, written $\rho(A)$, is the largest absolute value of its eigenvalues.
\subsection{Tools}
Since we do not need the most general form of the Perron--Frobenius Theorem~\cite{Per1907, Fro1912}, we state only two of its corollaries.
\begin{lemma}[Perron's Theorem]
\label{theorem: perron}
For an $n$-by-$n$ matrix with positive entries, the spectral radius is an eigenvalue of multiplicity~$1$, such that its eigenvector has positive entries.
\end{lemma}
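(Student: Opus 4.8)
The plan is to combine a variational argument for existence with a positivity argument for simplicity. Write $A=(a_{ij})$ with all $a_{ij}>0$, and let $\Delta=\{x\in\mathbb R^n:x_i\ge 0,\ \sum_i x_i=1\}$ be the standard simplex. For $x\in\Delta$ set $f(x)=\min_{i:\,x_i>0}(Ax)_i/x_i$, the largest scalar $\lambda$ for which $Ax\ge\lambda x$ holds coordinatewise. Since $f$ is upper semicontinuous and $\Delta$ is compact, the value $r=\max_{x\in\Delta}f(x)$ is attained at some $z\in\Delta$, and this $r$ will turn out to be $\rho(A)$ with eigenvector $z$.

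First I would verify that $z$ is a genuine eigenvector and is strictly positive. By construction $Az\ge rz$; if equality failed in some coordinate, then $Az-rz$ would be a nonzero nonnegative vector, so applying $A$ (all of whose entries are positive) would give $A(Az)>r(Az)$ strictly in every coordinate, and rescaling $Az$ back to $\Delta$ would produce an $f$-value exceeding $r$, contradicting maximality. Hence $Az=rz$. Because $A>0$ while $z\ge 0,\ z\ne 0$, we get $Az>0$, so $r>0$ and $z=Az/r>0$. To see $r=\rho(A)$, take any eigenvalue $\lambda$ with eigenvector $w$; passing to absolute values coordinatewise and using the triangle inequality with $A>0$ yields $A|w|\ge|\lambda|\,|w|$, so $f$ evaluated at the normalization of $|w|$ is at least $|\lambda|$, giving $|\lambda|\le r$.

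For the multiplicity claim I would separate the geometric and algebraic parts. Geometric simplicity is the easier half: if $w$ is a real eigenvector for $r$, pick the scalar $t$ making $z-tw\ge 0$ with at least one vanishing coordinate; but any nonzero nonnegative eigenvector for $r$ must in fact be strictly positive (by the same implication $Az=rz\Rightarrow Az>0$ used above), so $z-tw$ is forced to be $0$, proving the real eigenspace is one-dimensional, and a complex eigenvector reduces to this by taking real and imaginary parts. Algebraic simplicity is the genuinely subtle point, and here the clean device is to apply the existence result already proved to $A^{\top}$, which also has positive entries, obtaining a strictly positive left eigenvector $y>0$ with $y^{\top}A=r\,y^{\top}$. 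If $r$ were a multiple root there would be a Jordan chain vector $v$ with $(A-rI)v=z$; pairing with $y$ gives $0=y^{\top}(A-rI)v=y^{\top}z$, which contradicts $y,z>0$. Hence no generalized eigenvector exists and $r$ is a simple root of the characteristic polynomial.

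I expect the main obstacle to be the algebraic simplicity, since the geometric statement and the identification $r=\rho(A)$ follow directly from the positivity of $A$ once the maximizer $z$ is in hand, whereas ruling out Jordan blocks requires bringing in the left eigenvector of $A^{\top}$ and the strict inequality $y^{\top}z>0$. A secondary technical point is justifying that the maximum defining $r$ is attained, i.e.\ the upper semicontinuity of $f$ on $\Delta$ despite the coordinate-dependent index set in the minimum; this can be handled by noting $f(x)$ equals $\max\{\lambda:Ax\ge\lambda x\}$, a quantity that is well behaved under limits on the compact set $\Delta$.
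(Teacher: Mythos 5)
Your proof is correct, but there is no proof in the paper to compare it with: the authors state \cref{theorem: perron} in the ``Tools'' subsection of \cref{section preliminaries} as a classical corollary of the Perron--Frobenius Theorem, with citations in place of a proof, and then use it as a black box in \cref{section proofs,section main}. So your argument supplies exactly what the paper omits. What you give is the standard Collatz--Wielandt variational proof: maximize $f(x)=\max\{\lambda: Ax\ge\lambda x\}$ over the simplex, show the maximizer $z$ is a strictly positive eigenvector whose eigenvalue $r$ dominates $|\lambda|$ for every eigenvalue $\lambda$, obtain geometric simplicity by subtracting the extremal multiple of $z$ from any other real eigenvector, and obtain algebraic simplicity by pairing a putative Jordan chain vector against the positive left eigenvector of $A^{\top}$. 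All the key steps are sound: the upper semicontinuity of $f$ holds for exactly the reason you indicate (each ratio $(Ax)_i/x_i$ with $x_i>0$ is an eventually valid upper bound on $f$ along any sequence converging to $x$); the implication ``nonnegative nonzero eigenvector $\Rightarrow$ strictly positive'' is what simultaneously rules out a non-eigenvector maximizer and a vanishing coordinate of $z-tw$; and $y^{\top}z>0$ genuinely contradicts $(A-rI)v=z$. Two details deserve to be written out in a polished version: the existence of the critical scalar $t$ (the set $\{t\in\mathbb R: z-tw\ge 0\}$ is a closed interval containing $0$, bounded on at least one side since $w\ne 0$, and a finite endpoint forces a vanishing coordinate), and the passage from ``algebraic multiplicity $\ge 2$ and geometric multiplicity $1$'' to the existence of $v$ with $(A-rI)v=z$ (a Jordan block of size at least $2$ over the one-dimensional eigenspace). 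Finally, note that the paper's applications (e.g.\ \cref{theorem obtuse,theorem caratheodory}) only need that no eigenvalue of geometric multiplicity at least two can equal $\rho(H)$, so your algebraic-simplicity step --- rightly identified by you as the subtlest part --- proves somewhat more than the paper strictly uses, which is a reasonable price for making the note self-contained.
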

\begin{lemma}[Frobenius's Theorem]
\label{theorem: frobenius}
For an $n$-by-$n$ matrix with non-negative entries, the spectral radius is an eigenvalue such that one of its eigenvectors has non-negative entries.
\end{lemma}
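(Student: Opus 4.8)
The plan is to derive Frobenius's Theorem from Perron's Theorem (\cref{theorem: perron}) by a perturbation-and-limit argument. Let $A$ be an $n$-by-$n$ matrix with non-negative entries, and let $J$ denote the $n$-by-$n$ matrix all of whose entries equal $1$. For each $\varepsilon > 0$, the matrix $A_\varepsilon = A + \varepsilon J$ has strictly positive entries, so \cref{theorem: perron} applies: its spectral radius $\rho(A_\varepsilon)$ is an eigenvalue, and there is an eigenvector $v_\varepsilon$ with positive entries. After scaling I may assume $\|v_\varepsilon\| = 1$, so that each $v_\varepsilon$ lies in the intersection of the unit sphere with the closed non-negative orthant.

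The next step is to let $\varepsilon \to 0$ and pass to a limit. The vectors $v_\varepsilon$ all lie in a fixed compact set (the unit sphere intersected with the closed non-negative orthant), so along a suitable sequence $\varepsilon_k \to 0$ I can extract a convergent subsequence $v_{\varepsilon_k} \to v$. Since that set is closed, $v$ has non-negative entries, and since $\|v\| = 1$ the vector $v$ is non-zero. It then remains to identify the eigenvalue: I claim $\rho(A_{\varepsilon_k}) \to \rho(A)$, and granting this, passing to the limit in the relation $A_{\varepsilon_k} v_{\varepsilon_k} = \rho(A_{\varepsilon_k}) v_{\varepsilon_k}$ yields $A v = \rho(A) v$, which exhibits a non-negative eigenvector of $A$ for the eigenvalue $\rho(A)$, as desired.

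The main obstacle, and the only step that is not purely formal, is the continuity claim $\rho(A_\varepsilon) \to \rho(A)$ as $\varepsilon \to 0$. This follows from the fact that the spectral radius depends continuously on the matrix: the coefficients of the characteristic polynomial $\det(\lambda I - A_\varepsilon)$ are polynomials in $\varepsilon$, hence continuous, and the roots of a monic polynomial vary continuously with its coefficients. Taking the maximum modulus of the roots preserves continuity, which gives the claim. (Alternatively, one could bound $\lvert \rho(A_\varepsilon) - \rho(A) \rvert$ directly using monotonicity of the spectral radius under entrywise domination of non-negative matrices, but invoking continuity of polynomial roots is the cleanest route.) Once this continuity is in hand, the limit argument above closes the proof.
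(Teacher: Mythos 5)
Your proof is correct; note, though, that there is no proof in the paper to compare it against --- the authors state both \cref{theorem: perron} and \cref{theorem: frobenius} without proof, as corollaries of the classical Perron--Frobenius Theorem, citing the original papers of Perron and Frobenius. What you give is the standard perturbation-and-compactness derivation of the non-negative case from the positive case, and it is sound: $A_\varepsilon=A+\varepsilon J$ has positive entries, \cref{theorem: perron} yields unit positive eigenvectors $v_\varepsilon$, the unit sphere intersected with the closed non-negative orthant is compact, and the eigenvalue equation passes to the limit along a subsequence. The one genuine analytic input, continuity of the spectral radius in the matrix entries, you correctly reduce to continuity of the roots of a monic polynomial in its coefficients (a standard fact, properly read as continuity of the multiset of roots, from which continuity of the maximum modulus follows). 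One caveat on your parenthetical alternative: monotonicity of the spectral radius alone only gives that $\rho(A_\varepsilon)$ decreases to some limit $\mu\ge\rho(A)$ as $\varepsilon\downarrow 0$; to finish that way you must additionally observe that your own limit equation $Av=\mu v$ with $v\ne 0$ exhibits $\mu$ as an eigenvalue of $A$, whence $\mu\le\rho(A)$ and so $\mu=\rho(A)$. With that small supplement either route closes the argument, and either one would make the paper self-contained modulo Perron's Theorem itself.
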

Also, we need two finite-dimensional versions of the Hahn--Banach Theorem in \cref{section main}.
\begin{theorem}%[First Separation Theorem]
\label{theorem 1st separation}
If the origin $o$ of $\mathbb R^d$ does not lie in the convex hull of points $x_1, \dots, x_n\in \mathbb R^d$, then there exists a vector $y\in \mathbb R^d$ such that $\langle y, x_i\rangle >0$ for all $i\in [n]$.
\end{theorem}
\begin{theorem}%[Second Separation Theorem]
\label{theorem 2st separation}
If the origin $o$ of $\mathbb R^d$ is not an interior point of the convex hull of points $x_1, \dots, x_n\in \mathbb R^d$, then there exists a vector $y\in \mathbb R^d$ such that $\langle y, x_i\rangle \ge 0$ for all $i\in [n]$.
\end{theorem}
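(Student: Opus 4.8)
The plan is to derive this statement from \cref{theorem 1st separation} by a compactness and limiting argument, converting the strict separation available there into the weak inequalities asserted here. The point is that ``not an interior point'' is exactly the hypothesis needed to produce, arbitrarily close to $o$, points genuinely outside the convex hull, to which the strict separation theorem applies.

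First I would unwind the hypothesis. Writing $K = \conv\{x_1,\dots,x_n\}$, the assumption that $o$ is not an interior point of $K$ means precisely that no open ball centred at $o$ is contained in $K$. Hence for every positive integer $k$ the ball of radius $1/k$ about $o$ is not contained in $K$, so I may choose a point $p_k$ with $\lvert p_k\rvert < 1/k$ and $p_k \notin K$. This single device handles both the case $o\notin K$ and the case where $o$ is a boundary point of $K$, so no separate case analysis is needed.

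Next, for each $k$ a translation by $-p_k$ gives $o \notin K - p_k = \conv\{x_1 - p_k,\dots,x_n - p_k\}$, because $p_k\notin K$. Applying \cref{theorem 1st separation} to the translated points yields a vector $y_k$, which I normalise so that $\lvert y_k\rvert = 1$, satisfying $\langle y_k, x_i - p_k\rangle > 0$, that is $\langle y_k, x_i\rangle > \langle y_k, p_k\rangle$, for every $i \in [n]$.

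Finally I would invoke compactness of the unit sphere in $\mathbb R^d$ to extract a subsequence $y_{k_j}$ converging to some unit vector $y$. Since $\lvert p_{k_j}\rvert \to 0$, the Cauchy--Schwarz bound $\lvert\langle y_{k_j}, p_{k_j}\rangle\rvert \le \lvert p_{k_j}\rvert \to 0$ shows the right-hand sides tend to $0$, and passing to the limit in each of the finitely many inequalities $\langle y_{k_j}, x_i\rangle > \langle y_{k_j}, p_{k_j}\rangle$ gives $\langle y, x_i\rangle \ge 0$ for all $i\in[n]$, as required. The only real subtlety is the expected one: strict inequalities survive the limit only in weak form, which is exactly the conclusion sought, and the finiteness of the index set $[n]$ guarantees that the limit can be taken simultaneously in all $n$ inequalities without difficulty.
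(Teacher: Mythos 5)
Your proof is correct, and its engine is the same as the paper's: approximate $o$ by points lying outside $K=\conv\{x_1,\dots,x_n\}$ but converging to $o$, apply \cref{theorem 1st separation} to the translated configuration, normalise the separating vectors, and use compactness of the unit sphere to pass to a limit, where the strict inequalities survive only in the weak form that the statement asks for. The one place you genuinely diverge is in how the exterior points are produced. The paper first establishes the existence of a ray $[o,t)$ with $[o,t)\cap K=\{o\}$ and then takes points $o_i\to o$ on that ray; this step carries its own burden of proof, which the paper discharges with a rather terse contrapositive claim (if every ray meets $K$ in a second point, one can find $d+1$ points of $K$ whose convex hull has $o$ in its interior). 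You instead read the exterior points $p_k$ directly off the definition of interior point: no ball $B(o,1/k)$ is contained in $K$, so pick $p_k\in B(o,1/k)\setminus K$; translation by $p_k$ then gives $o\notin\conv\{x_1-p_k,\dots,x_n-p_k\}$ precisely because $p_k\notin K$, which is all the limiting argument needs. Your route is shorter, needs no case distinction between $o\notin K$ and $o$ on the boundary of $K$, and bypasses the only delicate auxiliary claim in the paper's proof; the paper's ray construction buys nothing extra here, since the limit argument never uses that the approximating points are collinear.
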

\section{Proof of Rankin's Theorem}
\label{section proofs}

We state the two parts of Rankin's Theorem as separate theorems.
\begin{theorem}[Rankin]
	\label{theorem obtuse}
	If $\{v_1,\dots, v_n\}$ is a set of non-zero vectors in $\mathbb R^d$ such that the angle between any two of them is larger than $\frac{\pi}{2}$, then $n\le d+1$.
\end{theorem}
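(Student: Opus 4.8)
The plan is to convert the angular hypothesis into a sign condition on inner products and then bound $n$ through the rank of the Gram matrix, with Frobenius's Theorem supplying a linear dependence whose coefficients have a usable sign (in place of the classical trick of splitting a dependence into its positive and negative parts). First I record that the angle between $v_i$ and $v_j$ exceeds $\frac{\pi}{2}$ precisely when $\langle v_i,v_j\rangle<0$; thus the hypothesis reads $\langle v_i,v_j\rangle<0$ for all $i\neq j$, while $\langle v_i,v_i\rangle>0$. Suppose, for contradiction, that $n\ge d+2$. As $n>d$ the vectors are linearly dependent, so I pick a minimal linearly dependent subset $T\subseteq[n]$. Its rank is $|T|-1\le d$, hence $|T|\le d+1<n$, and there is an index $k\in[n]\setminus T$, which will serve as the witness of the contradiction.

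For the Perron--Frobenius step I form the Gram matrix $G=(\langle v_i,v_j\rangle)_{i,j\in T}$ and split it as $G=D-B$, where $D$ is the positive diagonal matrix with entries $\|v_i\|^2$ and $B$ is the symmetric matrix with $B_{ij}=-\langle v_i,v_j\rangle>0$ for $i\neq j$ and $B_{ii}=0$; thus $B$ has non-negative entries. A coefficient vector $c$ satisfies $\sum_{i\in T}c_iv_i=0$ exactly when $Gc=0$, since $c^\top Gc=\|\sum_i c_iv_i\|^2$. I then look at the symmetric non-negative matrix $N=D^{-1/2}BD^{-1/2}$, which satisfies $I-N=D^{-1/2}GD^{-1/2}$. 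Because $G$ is singular, $1$ is an eigenvalue of $N$, so $\rho(N)\ge 1$; because $G$ is positive semidefinite, every eigenvalue of $N$ is at most $1$, and since \cref{theorem: frobenius} makes $\rho(N)$ an eigenvalue, $\rho(N)\le 1$. Hence $\rho(N)=1$, and the non-negative eigenvector furnished by \cref{theorem: frobenius} yields, after multiplying by $D^{-1/2}$, a vector $c\ge 0$, $c\neq 0$, with $\sum_{i\in T}c_iv_i=0$. Minimality of $T$ rules out any zero coordinate (it would give a dependence on a proper subset), so $c_i>0$ for every $i\in T$.

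The contradiction now comes from testing this dependence against the witness $v_k$. Since $k\notin T$,
\[
0=\Big\langle v_k,\ \sum_{i\in T}c_iv_i\Big\rangle=\sum_{i\in T}c_i\langle v_k,v_i\rangle<0,
\]
because each coefficient $c_i$ is positive and each inner product $\langle v_k,v_i\rangle$ is negative. This is absurd, so the assumption $n\ge d+2$ is untenable and $n\le d+1$.

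The step I expect to be the crux is producing the non-negative dependence together with a vector lying outside its support. Frobenius's Theorem only returns a non-negative eigenvector attached to the spectral radius, so two matters must be handled: that the spectral radius equals $1$, which is what the positive semidefiniteness of $G$ secures and which lets the eigenvector sit in the kernel of $G$ rather than in a larger eigenspace; and that the support of the dependence is a proper subset of $[n]$, so that a witness $v_k$ is available. The latter is the reason induction on the number of points---here encapsulated in the passage to a minimal dependent subset---seems unavoidable: a strictly positive, full-support dependence really can occur, and it does so exactly in the extremal configuration $n=d+1$, so no purely local application of \cref{theorem: frobenius} can by itself close the gap.
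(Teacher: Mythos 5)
Your proof is correct, but it takes a genuinely different route from the paper's. The paper applies Perron's Theorem (\cref{theorem: perron}) to the single matrix $H=\lambda I_n-G$, where $G$ is the Gram matrix of \emph{all} $n$ vectors and $\lambda>\langle v_i,v_i\rangle$ makes every entry of $H$ positive; the contradiction is pure eigenvalue bookkeeping: since $\rank G\le d\le n-2$, the value $\lambda$ is an eigenvalue of $H$ of multiplicity at least two, and positive semidefiniteness of $G$ makes $\lambda$ the largest eigenvalue of $H$, contradicting the multiplicity-one clause of Perron's Theorem. No eigenvector is ever extracted and no subset of the vectors is selected. You instead pass to a minimal dependent subset $T$, normalize to $N=D^{-1/2}BD^{-1/2}$, use the weaker Frobenius form (\cref{theorem: frobenius}) twice --- once to pin $\rho(N)=1$, once to extract a non-negative kernel vector of $G$ --- upgrade it to a strictly positive dependence via minimality of $T$, and then contradict the sign hypothesis by pairing against a witness $v_k$ outside $T$. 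Your argument is essentially the classical sign-splitting proof of Rankin's theorem with the Frobenius eigenvector replacing the usual decomposition of a dependence into positive and negative parts; it buys an explicit positive dependence and uses only the non-negative version of Perron--Frobenius, at the cost of the combinatorial detour through $T$ and $k$. One remark on your closing paragraph: the claim that the passage to a minimal dependent subset ``seems unavoidable'' is disproved by the paper's own proof, which closes the gap with a single global application of Perron's Theorem, precisely because it argues via multiplicity of the top eigenvalue rather than via the sign pattern of an extracted dependence.
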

\begin{proof}
Suppose to the contrary that $n\ge d+2$. Let $G$ be the \emph{Gram matrix} of the vectors $v_1,\dots, v_n$, that is, $G=(\langle v_i, v_j\rangle)_{i,j=1}^n$. Choose a positive $\lambda$ such that $\lambda>\langle v_i, v_i\rangle$ for all $i\in [n]$, and set $H=\lambda I_n-G$. The hypothesis of the theorem implies that all entries of $H$ are positive. By \cref{theorem: perron}, the spectral radius $\rho(H)$ is the largest eigenvalue of $H$ of multiplicity one.

Obviously, $\rank G\le d$ because $G$ is the Gram matrix of $d$-dimensional vectors. Hence $0$ is an eigenvalue of $G$ of multiplicity at least two, that is, $\lambda$ is an eigenvalue of $H$ of multiplicity at least two. Since the Gram matrix $G$ is positive semidefinite, and thus, all its eigenvalues are non-negative, $\lambda$ must be the largest eigenvalue of $H$. Therefore, $\lambda=\rho(H)$, contradicting the fact that the multiplicity of the largest eigenvalue is one.
\end{proof}
\begin{theorem}[Rankin]
	\label{theorem non-acute}
	If $\{v_1,\dots, v_n\}$ is a set of non-zero vectors in $\mathbb R^d$ such that the angle between any two of them is at least $\frac{\pi}{2}$, then $n\le 2d$.
\end{theorem}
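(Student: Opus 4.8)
The plan is to mirror the proof of \cref{theorem obtuse}, replacing Perron's Theorem by Frobenius's Theorem (\cref{theorem: frobenius}) and adding an induction on the number $n$ of vectors. The hypothesis now reads $\langle v_i, v_j\rangle \le 0$ for all $i \ne j$, so, with $G = (\langle v_i, v_j\rangle)_{i,j=1}^{n}$ and $H = \lambda I_n - G$ for a fixed $\lambda > \max_i \langle v_i, v_i\rangle$, the off-diagonal entries $-\langle v_i, v_j\rangle$ of $H$ are non-negative and its diagonal entries are positive; thus $H$ has non-negative entries but need not have positive ones, which is exactly why Perron's Theorem must give way to \cref{theorem: frobenius}. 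The latter provides a non-negative eigenvector $w$ of $H$ for $\rho(H)$. Since the eigenvalues of $H$ are the numbers $\lambda - \mu$ as $\mu$ runs over the eigenvalues of the positive semidefinite matrix $G$, one checks that $\rho(H) = \lambda - \mu_{\min}(G)$, and therefore $Gw = \mu_{\min}(G)\,w$: the vector $w \ge 0$ is an eigenvector of $G$ for its smallest eigenvalue.

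I would run the induction on $n$, writing $r = \dim \operatorname{span}\{v_1, \dots, v_n\}$, so $1 \le r \le d$. If the vectors are linearly independent then $n = r \le d \le 2d$ and there is nothing to prove; hence assume $r < n$, so that $G$ is singular, $\mu_{\min}(G) = 0$, and $Gw = 0$. Reading off $0 = (Gw)_i = \langle v_i, \sum_j w_j v_j\rangle$ for each $i$ and summing, $\bigl\|\sum_j w_j v_j\bigr\|^2 = \sum_i w_i \langle v_i, \sum_j w_j v_j\rangle = 0$, so $\sum_j w_j v_j = 0$; the Frobenius eigenvector thus encodes a non-negative linear dependence. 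Among all nonzero non-negative solutions of $\sum_j c_j v_j = 0$ I fix one, $c^{*}$, whose support $S = \{j : c^{*}_j > 0\}$ is minimal with respect to inclusion. The decisive consequence of the sign condition is an orthogonality relation: for $k \notin S$ we have $0 = \langle v_k, \sum_{j \in S} c^{*}_j v_j\rangle = \sum_{j \in S} c^{*}_j \langle v_k, v_j\rangle$, a sum of non-positive terms, whence $\langle v_k, v_j\rangle = 0$ for every $j \in S$.

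Put $W = \operatorname{span}\{v_j : j \in S\}$ and $W' = \operatorname{span}\{v_k : k \notin S\}$; the orthogonality just established gives $W \perp W'$ and hence $\dim W + \dim W' \le d$. If $S$ is a proper subset of $[n]$, both families are smaller than the original, the induction hypothesis yields $|S| \le 2\dim W$ and $n - |S| \le 2\dim W'$, and adding these gives $n \le 2(\dim W + \dim W') \le 2d$. The step I expect to be the main obstacle is the opposite case, where the minimal non-negative dependence has full support $S = [n]$ and the split becomes vacuous; this is precisely the feature absent from the strict setting of \cref{theorem obtuse}, where Perron's multiplicity-one conclusion ended the argument at once. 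To handle it I would invoke minimality against the dimension $n - r$ of the space of linear dependences: were this dimension at least $2$, I could choose a dependence $u$ not proportional to $c^{*}$ and slide $c^{*} + t u$ until some coordinate first vanishes, obtaining a non-negative dependence of strictly smaller support and contradicting minimality. Hence $n - r \le 1$, so $n \le r + 1 \le 2r \le 2d$, which closes the induction.
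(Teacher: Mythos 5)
Your argument is correct, but it takes a genuinely different route from the paper's. The two proofs share only the opening move: form the Gram matrix $G$, shift it to a matrix $H$ with non-negative entries, and invoke \cref{theorem: frobenius}. The paper then never touches the eigenvector: it normalizes the $v_i$ to unit length, sets $H=I_n-G$, and uses Frobenius only to conclude that $\rho(H)$, being necessarily an eigenvalue, equals the largest eigenvalue $1$ of $H$ (here $n>d$ makes $G$ singular, and positive semidefiniteness makes $0$ its smallest eigenvalue); the contradiction then comes from a trace count: $0=\tr(H)=\lambda_1+\dots+\lambda_d+(n-d)$ together with $|\lambda_i|\le\rho(H)=1$ forces $n-d\le d$, with no induction at all. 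You use precisely the part of Frobenius that the paper discards, namely the non-negative eigenvector $w$: from $Gw=\mu_{\min}(G)\,w=0$ you extract a non-negative linear dependence $\sum_j w_jv_j=0$ (your identification $\rho(H)=\lambda-\mu_{\min}(G)$ is right, but note that it too rests on Frobenius, i.e.\ on $\rho(H)$ being an eigenvalue of $H$, since a priori $\lambda-\mu_{\max}(G)$ could be the eigenvalue of largest modulus), and from there the argument is combinatorial: a support-minimal non-negative dependence either has proper support $S$, in which case the sign condition makes the two subfamilies mutually orthogonal and induction on $n$ adds the bounds $|S|\le 2\dim W$ and $n-|S|\le 2\dim W'$, or has full support, in which case your sliding argument shows the dependence space is one-dimensional and $n\le r+1\le 2r\le 2d$. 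The trade-off: the paper's proof is shorter and induction-free (its Rankin proofs, unlike those of \cref{theorem caratheodory} and \cref{theorem steinitz}, need no induction, and avoiding induction is exactly the open question the authors raise), while yours exposes more structure --- extremal families decompose into pairwise orthogonal blocks --- and runs structurally parallel to the paper's proofs of \cref{theorem caratheodory} and \cref{theorem steinitz}, where it is the Perron/Frobenius eigenvector, not a trace identity, that drives the conclusion.
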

\begin{proof}
Suppose to the contrary that $n\ge 2d+1$. Without loss of generality we can assume that the vectors $v_1, \dots, v_n$ are of unit length. Let $G$ be the Gram matrix of the vectors $v_1,\dots, v_n$. Set $H=I_n-G$. The hypothesis of the theorem implies that all entries of $H$ are non-negative. By \cref{theorem: frobenius}, the spectral radius $\rho(H)$ is the largest eigenvalue of $H$. 

Obviously, $\rank G\le d$ because $G$ is the Gram matrix of $d$-dimensional vectors. Hence, $0$ is an eigenvalue of $G$ of multiplicity at least $n-d$, and thus, $1$ is an eigenvalue of $H$ of multiplicity at least $n-d$. Since the Gram matrix $G$ is positive semidefinite, that is, $0$ is its smallest eigenvalue, $1$ is the largest eigenvalue of $H$. It means that $\rho(H)= 1$. Let $\lambda_1, \dots, \lambda_d, 1,\dots,1$ be the eigenvalues of $H$, indexed in non-decreasing order. Thus we have
\[
0=\tr (H)=\lambda_1+\dots+\lambda_d+(n-d),\text{and thus, } |\lambda_1+\dots+\lambda_d|=n-d\ge d+1.
\]
However, the last inequality contradicts $|\lambda_i|\le 1=\rho(H)$ for all $i\in [d]$.
\end{proof}
\section{Proofs of Caratheodory's and Steinitz's Theorems}
\label{section main}
\begin{theorem}[Carath\'{e}odory's Theorem]
	\label{theorem caratheodory}
	If the origin $o\in \mathbb R^d$ lies in the convex hull of points $v_1,\dots, v_n\in \mathbb R^d$, then there is a set $J\subseteq [n]$ of size at most $d+1$ such that $o$ lies in the convex hull of $\{v_j\;:\; j\in J\}$.
\end{theorem}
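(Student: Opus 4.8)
The plan is to run induction on the number of points $n$, never on the dimension, exactly mirroring the two proofs of Rankin's Theorem. If $n\le d+1$ I take $J=[n]$ and there is nothing to prove, so assume $n\ge d+2$ and that the statement holds for every configuration of $n-1$ points. Two harmless normalisations come first. Replacing $\mathbb R^d$ by the linear span of $v_1,\dots,v_n$ only decreases the target bound $d+1$, so I may assume the $v_i$ span $\mathbb R^d$. Writing the hypothesis as $o=\sum_{i=1}^n\lambda_i v_i$ with $\lambda_i\ge 0$ and $\sum_i\lambda_i=1$, I may discard any point whose coefficient is $0$ and finish by the inductive hypothesis. The whole task thus reduces to the case in which every representation of $o$ has all coefficients positive, where I must \emph{manufacture} a new convex representation of $o$ in which some coefficient vanishes, and then delete the corresponding point.

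To do this I would imitate \cref{theorem obtuse}. Form the Gram matrix $G=(\langle v_i,v_j\rangle)_{i,j=1}^n$; since the $v_i$ span $\mathbb R^d$ we have $\rank G=d$, so $0$ is an eigenvalue of $G$ of multiplicity $n-d\ge 2$. The coefficient vector $\lambda=(\lambda_1,\dots,\lambda_n)$ lies in $\ker G$, because the $i$-th entry of $G\lambda$ is $\langle v_i,\sum_j\lambda_j v_j\rangle=\langle v_i,o\rangle=0$. Hence $\ker G$ is a subspace of dimension at least two containing the strictly positive vector $\lambda$. Choosing $w\in\ker G$ not proportional to $\lambda$ and moving from $\lambda$ along $\pm w$, I reach a first point $w'=\lambda+tw$ on the boundary of the non-negative orthant: it satisfies $w'\ge 0$, $Gw'=0$, and $w'_{i_0}=0$ for some $i_0$. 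Since $w'\ne o$ and $w'\ge 0$ we have $\sum_i w'_i>0$, and because $Gw'=0$ forces $\sum_j w'_j v_j$ to be orthogonal to the spanning set $\{v_i\}$, we get $\sum_j w'_j v_j=o$. Rescaling $w'$ to sum to $1$ yields a convex representation of $o$ with the vanishing coefficient $w'_{i_0}$, and deleting $v_{i_0}$ completes the induction.

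The step I expect to resist is the one just described, for a structural reason visible already in \cref{theorem obtuse}: there the matrix $\lambda I_n-G$ is automatically \emph{positive}, so \cref{theorem: perron} pins the multiplicity of its largest eigenvalue at one and the rank deficiency of $G$ gives an instant contradiction, whereas here the off-diagonal entries $-\langle v_i,v_j\rangle$ carry no definite sign and this clean Perron argument is simply unavailable. My plan therefore trades the spectral contradiction for the orthant-boundary construction above, whose delicate point is guaranteeing that one can always move off $\lambda$ inside $\ker G$ so that a coordinate reaches $0$ while the relation $\sum_j(\cdot)\,v_j=o$ is preserved; making this rigorous, and clarifying which Perron--Frobenius statement truly plays the role that simplicity of the top eigenvalue played for Rankin, is where I anticipate the main effort. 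I would keep \cref{theorem 2st separation} and \cref{theorem 1st separation} in reserve, as they are the natural tools for deciding when $o$ lies on the boundary of $\conv\{v_i\}$ and for the companion interior (Steinitz) statement promised in this section.
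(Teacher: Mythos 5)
Your proof is correct, but it is genuinely different from the paper's --- in fact it uses no Perron--Frobenius input at all. What you construct is essentially the classical affine-dependence proof of Carath\'eodory's theorem, with $\ker G$ (for $G=V^tV$) playing the role of the dependence relation: sliding $\lambda$ inside $\ker G$ to the boundary of the non-negative orthant yields $w'\ge 0$, $w'\neq 0$, $Gw'=0$, hence $\sum_j w'_jv_j=o$, and rescaling (legitimate precisely because the target point is the origin) gives a convex representation with a vanishing coefficient. The step you flag as delicate is routine: since $w\neq 0$, the set $\{t\in\mathbb R:\lambda+tw\ge 0\}$ is a closed interval containing $0$ in its interior and bounded on at least one side; its finite endpoint is your $t^*$, and $\lambda+t^*w\neq 0$ because $w$ is not proportional to $\lambda$. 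The paper takes the route you correctly judged unavailable for the Gram matrix, and its fix answers your structural worry: positivity is restored not by sign conditions among the $v_i$ but by pairing the $v_i$ against a \emph{second} family of vectors obtained from separation. Concretely, the paper proves the contrapositive of the induction step: assuming $o\notin\conv\{v_i:i\in[n]\setminus\{j\}\}$ for every $j$, \cref{theorem 1st separation} supplies $y_j$ with $\langle v_i,y_j\rangle>0$ for all $i\neq j$, so $H=\lambda I_n+V^tY$ has positive entries for $\lambda$ large; since $\rank V^tY\le d$ and $n>d+1$, the value $\lambda$ is an eigenvalue of $H$ of multiplicity at least two, so by \cref{theorem: perron} $\rho(H)>\lambda$, and the positive Perron eigenvector $x$ gives $V^tYx=(\rho(H)-\lambda)x$, i.e., $Yx$ is a direction with $\langle v_i,Yx\rangle>0$ for all $i$, placing all $v_i$ in an open half-space and hence $o$ outside their hull. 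In short: your argument is more elementary and self-contained, but it bypasses the entire point of the paper, which is to deduce Carath\'eodory from Perron's theorem; the paper's argument is what realizes that aim. One small logical quibble in your write-up: you cannot reduce to the case where \emph{every} representation of $o$ has positive coefficients, nor do you need to --- it suffices that the given representation has positive coefficients, from which your construction manufactures one with a zero coefficient.
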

\begin{proof}
We use induction on $n$. The base case, $n=d+1$, being trivial,
it is sufficient to show that if $n> d+1$ and $o\not \in \conv\left\{v_i:i\in [n]\setminus\{j\}\right\}$ for all $j\in[n]$, then $o\not \in \conv\left\{v_i:i\in [n]\right\}$.

By \cref{theorem 1st separation}, for all $j\in [n]$, there is a vector $y_j$ such that $\langle x_i, y_j\rangle>0$ for all $i\in [n]\setminus\{j\}$. Choose a positive $\lambda$ such that $\lambda+\langle v_i, y_i\rangle>0$ for all $i\in[n]$ and set $H=\lambda I_n+V^tY$, where $V=[v_1,\dots, v_n]$ and $Y=[y_1,\dots, y_n]$. By \cref{theorem: perron}, the spectral radius $\rho(H)$ is the largest eigenvalue of $H$ of multiplicity one. 

Obviously, $\rank V^tY\le d$ because $V$ and $Y$ are $n$-by-$d$ matrices. Hence, $0$ is an eigenvalue of $G$ of multiplicity at least two, that is, $\lambda$ is an eigenvalue of $H$ of multiplicity at least two. It follows that $\rho(H)\ne \lambda$, so $\rho(H)>\lambda$, because the spectral radius cannot be less than a positive eigenvalue. Consider the eigenvector $x$ of the eigenvalue $\rho(H)$. By \cref{theorem: perron}, its entries are positive. Therefore, we obtain
\[
\left(\lambda I_n+V^tY\right)x=\rho(H)x,
\mbox{ and thus, }
V^tYx=(\rho(H)-\lambda)x.
\]
Hence, $Yx$ is a $d$-dimensional vector such that $\langle v_i, Yx\rangle>0$ for all $i\in[n]$, because all entries of the vector $(\rho(H)-\lambda)x$ are positive. So the points $v_1,\dots, v_n$ lie in an open half-space bounded by a hyperplane passing through the origin. Thus, their convex hull does not contain the origin.
\end{proof}
\begin{theorem}[Steinitz's Theorem]
	\label{theorem steinitz}
	If the origin $o$ of $\mathbb R^d$ is an interior point of the convex hull of points $v_1,\dots, v_n\in \mathbb R^d$, then there is a set $J\subseteq [n]$ of size at most $2d$ such that the point $o$ is interior of convex hull of
	$\{v_j\;:\; j\in J\}$.
\end{theorem}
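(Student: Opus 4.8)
The plan is to mirror, via \cref{theorem: frobenius} and \cref{theorem 2st separation}, the passage from \cref{theorem obtuse} to \cref{theorem non-acute}, exactly as \cref{theorem caratheodory} mirrored \cref{theorem obtuse}. I would induct on $n$, the base case $n=2d$ being trivial (take $J=[n]$). For the inductive step it suffices to prove that if $n>2d$ and $o\notin\intr\conv\{v_i:i\in[n]\setminus\{j\}\}$ for every $j\in[n]$, then $o\notin\intr\conv\{v_i:i\in[n]\}$; this lets me discard one point whenever $n>2d$ and recurse.

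So, assuming $o\notin\intr\conv\{v_i:i\in[n]\setminus\{j\}\}$ for all $j$, \cref{theorem 2st separation} yields for each $j$ a nonzero $y_j$ with $\langle v_i,y_j\rangle\ge 0$ for all $i\ne j$. If $\langle v_j,y_j\rangle\ge 0$ for some $j$, then that $y_j$ satisfies $\langle v_i,y_j\rangle\ge 0$ for all $i$ and we are already done, since then all the $v_i$ lie in a closed halfspace whose boundary hyperplane passes through $o$. Otherwise $\langle v_j,y_j\rangle<0$ for every $j$, and I would rescale each $y_j$ by a positive factor so that $\langle v_j,y_j\rangle=-1$. Writing $V=[v_1,\dots,v_n]$ and $Y=[y_1,\dots,y_n]$ and setting $H=I_n+V^{t}Y$, the matrix $H$ then has non-negative entries and \emph{zero diagonal}, so $\tr H=0$; moreover $\rank V^{t}Y\le d$ forces $1$ to be an eigenvalue of $H$ of multiplicity $m\ge n-d$, whence $\rho(H)\ge 1$. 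By \cref{theorem: frobenius}, $\rho(H)$ is an eigenvalue of $H$ admitting a non-negative eigenvector $x$.

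Everything then comes down to the dichotomy $\rho(H)>1$ versus $\rho(H)=1$. If $\rho(H)>1$, then $V^{t}Yx=(\rho(H)-1)x$ shows that $y:=Yx$ is nonzero (as $V^{t}y=(\rho(H)-1)x\ne 0$) and satisfies $\langle v_i,y\rangle=(\rho(H)-1)x_i\ge 0$ for every $i$; hence all the $v_i$ lie in a closed halfspace bounded by a hyperplane through $o$, giving the desired $o\notin\intr\conv\{v_i:i\in[n]\}$. If instead $\rho(H)=1$, then every eigenvalue of $H$ has modulus at most $1$; as $\tr H=0$ and $1$ is an eigenvalue of multiplicity $m$, the other $n-m$ eigenvalues sum to $-m$, and since each has modulus at most $1$ this forces $m\le n-m$. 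Together with $m\ge n-d$ this gives $n\le 2d$, contradicting $n>2d$. Thus $\rho(H)=1$ cannot occur, and the first alternative always applies.

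The place I expect to be the main obstacle is precisely this trace count, since it is the only step that distinguishes the bound $2d$ from the bound $d+1$ of \cref{theorem caratheodory}: the hypothesis $n>2d$ is used nowhere except to rule out the degenerate configuration $\rho(H)=1$. The two delicate points feeding into it are (i) engineering the zero diagonal---via the rescaling together with the ``done-early'' case $\langle v_j,y_j\rangle\ge 0$---so that $\tr H=0$ makes the count exact, and (ii) guaranteeing that the candidate separating vector $Yx$ is nonzero, which is exactly what the strict inequality $\rho(H)>1$ secures. I would watch the possibility $Yx=0$ most carefully, as it is the analogue of the multiplicity issue in \cref{theorem caratheodory} and is what the Perron--Frobenius input is there to exclude.
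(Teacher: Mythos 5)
Your proposal is correct and follows essentially the same route as the paper's own proof: the same reduction via induction, the same use of \cref{theorem 2st separation} with the normalization $\langle v_j,y_j\rangle=-1$, the matrix $H=I_n+V^{t}Y$, \cref{theorem: frobenius}, and the trace argument ruling out $\rho(H)=1$. Your bookkeeping with the multiplicity $m$ (giving $2m\le n$ and hence $n\le 2d$) is a slightly cleaner version of the paper's eigenvalue count, but it is the same idea.
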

\begin{proof}
Again, we use induction on $n$. The base case, $n=2d$, being trivial,
it is sufficient to show that if $n>2d$ and, for all $j\in [n]$, the origin $o$ is not an interior point of $\conv\left\{v_i:i\in [n]\setminus\{j\}\right\}$, then $o$ is not an interior point of $\conv\left\{v_i:i\in [n]\right\}$.

By \cref{theorem 2st separation}, for all $j\in[n]$, there is a non-zero vector $y_j$ such that $\langle v_i, y_j\rangle\ge 0$ for all $i\in [n]\setminus\{j\}$. If $\langle v_i, y_i\rangle\ge 0$ for some $i\in[n]$, then the origin is not an interior point of the convex hull of $v_1, \dots, v_n$. So, without loss of generality, we may assume that $\langle v_i, y_i\rangle =-1$ for all $i\in [n]$. Set $H=I_n+V^tY$, where $V=[v_1,\dots, v_n]$ and $Y=[y_1,\dots, y_n]$. By \cref{theorem: frobenius}, the spectral radius $\rho(H)$ is an eigenvalue of $H$. 

Obviously, $\rank V^tY\le d$ because $V$ and $Y$ are $n$-by-$d$ matrices, and hence, $0$ is an eigenvalue of $V^tY$ of multiplicity at least $(n-d)$, that is, $1$ is an eigenvalue of $H$ of multiplicity at least $(n-d)$. Suppose that $\rho(H)= 1$. Let $\lambda_1, \dots, \lambda_d, 1,\dots,1$ be the eigenvalues of $H$, indexed in non-decreasing order. Hence we have
\[
0=\tr (H)=\lambda_1+\dots+\lambda_d+(n-d),
\mbox{ and thus, }
|\lambda_1+\dots+\lambda_d|=n-d\ge d+1.
\]
But the last inequality contradicts $|\lambda_i|\le 1=\rho(H)$ for all $i\in [d]$. Hence $\rho(H)>1$ because the spectral radius cannot be less than a positive eigenvalue. Consider the eigenvector $x$ of the eigenvalue $\rho(H)$. By \cref{theorem: frobenius}, its entries are non-negative entries. Therefore, we get
\[
\left(I_n+V^tY\right)x=\rho(H)x,
\mbox{ and thus, }V^tYx=(\rho(H)-1)x.
\]
Hence, $\langle v_i, Yx\rangle\ge 0$ for all $i\in [n]$. Moreover, $Yx$ is a non-zero vector, because among non-negative entries of $(\rho(H)-1)x$ there is at least one positive. So the points $v_1, \dots, v_n$ lie in a closed half-space bounded by a hyperplane passing through the origin, that is, the point $o$ is not interior of their convex hull.
\end{proof}

\appendix
\section{Proofs of Separtion Theorems}
\label{section proofs of separation theorems}

Theorems~\ref{theorem 1st separation} and \ref{theorem 2st separation} are fundamental in the study of convex sets. For completeness, we include their proofs.

\subsection{Proof of Theorem~\ref{theorem 1st separation}}
By compactness of the set $\conv\{v_1,\dots,v_n\}$, it has a point  $v\in \conv \{v_1,\dots,v_n\}$ closest to the origin. Suppose that $\langle v, v_i\rangle\le 0$. Write $v_\delta:=(1-\delta) v+\delta v_i$. Thus we have
\begin{align}
\|v_\delta\|^2=\|(1-\delta) v+\delta v_i\|^2&=(1-\delta)^2 \|v\|^2+2(1-\delta)\delta \langle v, v_i\rangle+\delta^2 \|v_i\|^2\\
\label{equation delta}
&\le \|v\|^2-2\delta \|v\|^2+\delta^2 (\|v\|^2+\|v_i\|^2).
\end{align}
Since the coefficient for $\delta$ in \eqref{equation delta} is negative, $\|v_\delta\|^2>\|v\|^2$ for sufficiently small positive $\delta$. Points $v$ and $v_i$ lie in $\conv\{v_1,\dots, v_n\}$, so $v_\delta\in \conv \{v_1,\dots,v_n\}$, contradicting the choice of $v$.
Therefore, $\langle v, v_i\rangle>0$ for all $i\in [n]$.
\subsection{Proof of Theorem~\ref{theorem 2st separation}}
Set $K=\conv\{v_1,\dots, v_n\}$. Denote the \emph{ray} emanating from a point $x$ and passing through a point $y$ by $[x,y)$.

We claim that there exists a ray $[o,t)$ such that $[o,t)\cap K=\{o\}$. Otherwise, one can choose $d+1$ points $t_1,\dots, t_{d+1}\in K$ such that the point $o$ is an interior point of $\conv\{t_1, \dots, t_{d+1}\}\subseteq K$.

Consider a sequence $o_1, \dots, o_n,\dots$ of points distinct from $o$ on the ray $[o,t)$ and converging to $o$. By \cref{theorem 1st separation}, there exists a sequence of unit vectors $y_1,\dots, y_n,\dots$ in $\mathbb R^d$ such that $\langle y_i, v_j-o_i\rangle>0$ for all $j\in [n]$ and all positive integer $i$. Since the unit sphere is compact, there exists a subsequence of unit vectors $y_{i_1},\dots,y_{i_n},\dots$ converging to a unit vector $y\in \mathbb R^d$. Therefore, we obtain $\langle y, v_j\rangle\ge \langle y, o\rangle=0$.

\section*{Acknowledgments}
%The second author is supported in part by the Russian Foundation for Basic Research through grant no.\ ????, and and by the Leading Scientific Schools of Russia through grant no. NSh-6760.2018.1. 

The work was done while the second named author was an academic visitor at EPFL.

The authors are grateful to J\'{a}nos Pach and Imre B\'{a}r\'{a}ny for fruitful discussions.

\bibliographystyle{amsalpha}
\bibliography{biblio}

\providecommand{\bysame}{\leavevmode\hbox to3em{\hrulefill}\thinspace}
\providecommand{\MR}{\relax\ifhmode\unskip\space\fi MR }
% \MRhref is called by the amsart/book/proc definition of \MR.
\providecommand{\MRhref}[2]{%
  \href{http://www.ams.org/mathscinet-getitem?mr=#1}{#2}
}
\providecommand{\href}[2]{#2}
\begin{thebibliography}{Ran55}

\bibitem[B{\'a}r82]{barany1982generalization}
Imre B{\'a}r{\'a}ny, \emph{{A generalization of Carath{\'e}odory's theorem}},
  Discrete Mathematics \textbf{40} (1982), no.~2-3, 141--152.

\bibitem[Car07]{caratheodory1907}
Constantin Carath{\'e}odory, \emph{{{\"U}ber den Variabilit{\"a}tsbereich der
  Koeffizienten von Potenzreihen, die gegebene Werte nicht annehmen}},
  Mathematische Annalen \textbf{64} (1907), no.~1, 95--115.

\bibitem[Eck93]{eckhoff1993helly}
J{\"u}rgen Eckhoff, \emph{{Helly, Radon, and Carath{\'e}odory type theorems}},
  Handbook of Convex Geometry, Part A, Elsevier, 1993, pp.~389--448.

\bibitem[Fro12]{Fro1912}
Ferdinand~Georg Frobenius, \emph{{{\"U}ber Matrizen aus nicht negativen
  Elementen}}, Sitzungsber. K{\"o}nigl. Preuss. Akad. Wiss. (1912), 456--477.

\bibitem[Hel23]{helly1923mengen}
Eduard Helly, \emph{{{\"U}ber Mengen konvexer K{\"o}rper mit gemeinschaftlichen
  Punkte}}, Jahresbericht der Deutschen Mathematiker-Vereinigung \textbf{32}
  (1923), 175--176.

\bibitem[Per07]{Per1907}
Oskar Perron, \emph{{Zur Theorie der Matrices}}, Mathematische Annalen
  \textbf{64} (1907), no.~2, 248--263.

\bibitem[Rad21]{radon1921mengen}
Johann Radon, \emph{{Mengen konvexer K{\"o}rper, die einen gemeinsamen Punkt
  enthalten}}, Mathematische Annalen \textbf{83} (1921), no.~1-2, 113--115.

\bibitem[Ran55]{rankin1955}
Robert~Alexander Rankin, \emph{{The closest packing of spherical caps in $n$
  dimensions}}, Glasgow Mathematical Journal \textbf{2} (1955), no.~3,
  139--144.

\bibitem[Ste13]{steinitz1913}
Ernst Steinitz, \emph{{Bedingt konvergente Reihen und konvexe Systeme}},
  Journal f{\"u}r die reine und angewandte Mathematik \textbf{143} (1913),
  128--176.

\end{thebibliography}

\end{document}